\newtheorem{thm}{Theorem}
\newtheorem{lem}[thm]{Lemma}
\newtheorem{cor}[thm]{Corollary}
\theoremstyle{remark}
\newtheorem{rem}[thm]{Remark}
\DeclareMathOperator{\wt}{wt}
\DeclareMathOperator{\supp}{supp}
\newcommand{\FF}{\mathbb{F}}
\newcommand{\cB}{\mathcal{B}}
\newcommand{\cD}{\mathcal{D}}
\newcommand{\cP}{\mathcal{P}}
\begin{document}

\title{Self-dual codes and the non-existence of a quasi-symmetric $2$-$(37,9,8)$ design
with intersection numbers $1$ and $3$}

\author{
Masaaki Harada\thanks{
Research Center for Pure and Applied Mathematics, 
Graduate School of Information Sciences, 
Tohoku University, Sendai 980--8579, Japan},
Akihiro Munemasa\thanks{
Research Center for Pure and Applied Mathematics, 
Graduate School of Information Sciences, 
Tohoku University, Sendai 980--8579, Japan.},
 and
Vladimir D. Tonchev\thanks{
Department of Mathematical Sciences,
Michigan Technological University
Houghton, MI 49931, USA.}
}

\maketitle

\begin{abstract}
We prove that a certain binary linear code associated with the incidence matrix
of a quasi-symmetric $2$-$(37,9,8)$ design 
with intersection numbers $1$ and $3$
must be contained in 
an extremal doubly even self-dual code of length $40$.
Using the classification of extremal doubly even self-dual codes 
of length $40$, we show that
a quasi-symmetric $2$-$(37,9,8)$ design
with intersection numbers $1$ and $3$ does not exist.
\end{abstract}

%%%%%%%%%%%%%%%%%%%%%%%%%%%%%%%%%%%%%%%%%%%%%%%%%%%
\section{Introduction}

A {\em $t$-$(v,k,\lambda)$ design} $\cD$ is a pair of a set $\cP$ of $v$ points
and a collection $\cB$ of $k$-element subsets of 
$\cP$ (called blocks)
such that every $t$-element subset of $\cP$ is contained in exactly
$\lambda$ blocks.
The number of blocks that contain a given
point is traditionally denoted by $r$, and the total number of
blocks is $b$.
We assume that $t < k < v$, throughout this paper.
% A design with no repeated block is called {\em simple}.
% In this paper, we consider only simple designs.
Often a $t$-$(v,k,\lambda)$ design is simply called a $t$-design.
A $t$-design can be represented by its (block by point)
{\em incidence matrix} $A=(a_{ij})$, where $a_{ij}=1$ if the $j$th
point is contained in the $i$th block
and $a_{ij}=0$ otherwise.

A $2$-design is called {\em quasi-symmetric} if 
the number
of points in the intersection of any two distinct blocks 
takes only two values $x$ and $y$ $(x < y)$.
The values $x$ and $y$ are called the {\em  intersection numbers}.
Quasi-symmetric $2$-designs have been widely studied
because of their connections with strongly regular graphs
and binary self-orthogonal codes (see~\cite{SS}, \cite{T98}, \cite{T07}).
Four classes of quasi-symmetric $2$-designs were given 
in~\cite{N}, namely, multiples of symmetric $2$-designs,
Steiner systems with $v > k^2$, strongly resolvable $2$-designs,
and residual designs of biplanes.
A quasi-symmetric $2$-design $\cD$ is called {\em exceptional} if 
neither $\cD$ nor its complementary design belongs to one of these four classes.
%From now on, we assume that all quasi-symmetric $2$-designs are
%exceptional.
A fundamental problem  is to determine whether there is
an exceptional quasi-symmetric $2$-design for a given set of parameters.
The parameters of exceptional quasi-symmetric $2$-$(v,k,\lambda)$ designs 
with $2k \le v \le 70$
are listed in~\cite[Table~48.25]{QSHandbook}.

According to~\cite{QSHandbook},
the existence of quasi-symmetric $2$-$(37,9,8)$ design with
intersection numbers $1$ and $3$ is an open question.
Bouyuklieva and Varbanov~\cite{BV} proved that there is
no quasi-symmetric $2$-$(37,9,8)$ design 
with intersection numbers $1$ and $3$
having an automorphism of order $5$.

The existence of a quasi-symmetric $2$-$(37,9,8)$ design
with intersection numbers $1$ and $3$ has been of interest for several
reasons.
The block graph of such a design, where two blocks are adjacent
if they share three points, is a strongly regular graph
with parameters $(148, 84, 50, 44)$, and the existence of
a graph with these parameters is still unknown (see~\cite{AEB}, 
\cite[Table~3]{PSN}).

In~\cite{JT91} Jungnickel and Tonchev 
formulated as a conjecture a sufficient condition under which
a quasi-symmetric
$2$-$(v,k,s\lambda)$ design with
\[ k(k-1)=\lambda(v-1) \]
is necessarily the union of $s$ identical copies
of a symmetric $2$-$(v,k,\lambda)$ design. This conjecture, which was later
proved by Ionin and Shrikhande~\cite{IS}, and Sane~\cite{S},
left the parameters $2$-$(37,9,8)$ ($s=4$), as the smallest open case
for which a quasi-symmetric design which is not a multiple of
a symmetric design, might exist.
We note that the union of four identical copies of a symmetric $2$-$(37,9,2)$ design
%(up to isomorphism, there exist exactly four such designs~\cite{SM}),
(up to isomorphism, there exist exactly four such designs~\cite[page 37]{MR-Handbook}),
is a quasi-symmetric $2$-$(37,9,8)$ design with intersection numbers $2$ and $9$.

The parameters $2$-$(37,9,8)$ coincide with those of a derived design
with respect to a block of a (yet unknown~\cite[page 54]{MR-Handbook})
symmetric $2$-$(149,37,9)$ design. If a quasi-symmetric $2$-$(37,9,8)$
design with intersection numbers $1$ and $3$  can be embedded as a derived
design in a symmetric $2$-$(149,37,9)$ design, then the corresponding
residual design will be a quasi-symmetric $2$-$(112,28,9)$ design
with intersection numbers $6$ and $8$.  We note that according 
to~\cite[page 54]{MR-Handbook} a $2$-$(112,28,9)$ design is also unknown.

Self-dual codes, and more generally, self-orthogonal codes, have been used 
successfully for 
the construction and classification of quasi-symmetric designs, as well as for
deriving necessary conditions for their existence~\cite{Cal}, 
\cite{HMT}, \cite{HT}, \cite{JT92}, \cite{JT15}, \cite{MT}, \cite{RT}, 
\cite{T86}, \cite{T86b}.

The main result of this paper is the following theorem, which
was established by
using the recent classification of binary doubly even self-dual codes
of length $40$~\cite{BHM}.

\begin{thm}\label{thm}
A quasi-symmetric $2$-$(37,9,8)$ design
with intersection numbers $1$ and $3$ does not exist.
\end{thm}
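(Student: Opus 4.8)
The plan is to follow the strategy announced in the abstract: associate to a hypothetical quasi-symmetric $2$-$(37,9,8)$ design $\cD$ with intersection numbers $1$ and $3$ a binary linear code, show that this code (after a suitable extension) is forced to live inside an extremal doubly even self-dual code of length $40$, and then exploit the known classification of such codes to derive a contradiction. First I would record the basic numerical data: from $k(k-1)=\lambda(v-1)$ with $v=37$, $k=9$, $\lambda=8$ one gets $r=72$ and $b=296$. The incidence matrix $A$ is then a $296\times 37$ binary matrix. Working modulo $2$, quasi-symmetry with intersection numbers $1$ and $3$ (both odd!) means that any two distinct rows of $A$ meet in an odd number of points, while each row has weight $9$, also odd; hence over $\FF_2$ the row space $C$ of $A$ satisfies $\langle u,u\rangle=1$ and $\langle u,v\rangle=1$ for rows $u\neq v$. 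This is the standard observation that makes self-orthogonal codes enter: adjoining an all-ones coordinate (or more) fixes the parity. Concretely, I would append coordinates to make each generator have weight $\equiv 0\pmod 4$ and pairwise inner products $0$; appending the all-ones column of length $296$ gives weight-$10$ rows, still not doubly even, so one appends a further suitable fixed vector, arriving at a length-$40$ ambient space in which the rows become doubly even and mutually orthogonal. (This is exactly the kind of bordering used in the references \cite{HMT}, \cite{HT}, \cite{T86}.) Thus the extended code $\widetilde C$ is a doubly even self-orthogonal binary code of length $40$.

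Next I would argue about the dimension. The $\FF_2$-rank of the incidence matrix of a $2$-design is controlled by the Smith normal form / the standard $p$-rank bounds; for these parameters one shows $\dim C$ is large enough — indeed one wants $\dim\widetilde C \ge 20$ — so that by self-orthogonality $\widetilde C$ is a doubly even self-dual $[40,20]$ code. Here the key quantitative input is a rank computation: one must show $\operatorname{rank}_{\FF_2} A \ge 19$ (so that the bordered code has dimension $\ge 20$), which follows from design-theoretic rank bounds together with the fact that $A^\top A \equiv \lambda I + (\text{something})\pmod 2$ encodes the intersection pattern. Moreover I would show the minimum weight of $\widetilde C$ is at least $8$: the codewords of $\widetilde C$ are $\FF_2$-sums of rows of $A$ (with fixed borders), and using inclusion–exclusion on block intersections one checks no nonzero codeword can have weight $4$; since a doubly even self-dual code of length $40$ has minimum weight $\le 8$, this forces $\widetilde C$ to be \emph{extremal}, i.e.\ of minimum weight exactly $8$.

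At that point the classification of extremal doubly even self-dual $[40,20,8]$ codes \cite{BHM} applies: there are finitely many (up to equivalence) such codes, and they are explicitly available. The finishing step is then a search: for each code $D$ in the list, one checks whether $D$ can contain a configuration of $296$ codewords of the prescribed form (weight-$9$ rows with the two fixed border coordinates) whose supports, restricted to the original $37$ coordinates, form a $2$-$(37,9,8)$ design with pairwise intersections in $\{1,3\}$. Equivalently one examines the weight-$10$ codewords of $D$ that are $1$ on the appended all-ones coordinate, projects them to the $37$ design-coordinates, and asks whether $296$ of them can be chosen to meet the design axioms. The expectation — and this is the content of Theorem~\ref{thm} — is that for every code in the classification this is impossible, giving the contradiction.

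I expect the \textbf{main obstacle} to be two-fold. First, pinning down the bordering precisely: one must choose the extra $40-37=3$ coordinates (the all-ones vector plus whatever is needed) so that the resulting code is genuinely self-dual of dimension exactly $20$ and doubly even — this requires the rank lower bound on $A$ to be tight enough, and showing $\operatorname{rank}_{\FF_2}A$ is neither too small (else we don't reach dimension $20$) nor incompatible with the design structure. Second, the final elimination over the classified codes is a finite but genuinely combinatorial check: extremal $[40,20,8]$ codes have many weight-$8$ and weight-$10$ words, and one must rule out \emph{all} ways of extracting a $2$-$(37,9,8)$ subdesign with intersection numbers $1,3$. I would handle this by a computer search, organized so that the point-regularity ($r=72$) and the pair-regularity ($\lambda=8$) conditions prune the search drastically, and by using the block-graph parameters $(148,84,50,44)$ as an additional consistency filter. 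If no admissible selection exists in any code, the hypothetical design cannot exist, proving Theorem~\ref{thm}.
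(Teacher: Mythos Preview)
Your high-level strategy matches the paper's---embed the design code in an extremal doubly even self-dual $[40,20,8]$ code and then eliminate via the classification \cite{BHM}---but two of your steps are genuinely off. First, the parameters: from $r(k-1)=\lambda(v-1)$ one gets $r=36$ and $b=vr/k=148$, not $r=72$, $b=296$ (the identity $k(k-1)=\lambda(v-1)$ you quote does \emph{not} hold here; it holds for the symmetric $2$-$(37,9,2)$ design). Second, and this is the real gap, you try to force the bordered code itself to be self-dual by proving $\operatorname{rank}_{\FF_2}A\geq 19$. You give no argument for this rank bound, and in fact it is not needed. The paper instead appends \emph{three} all-ones columns to $A$: each row then has weight $9+3=12\equiv 0\pmod 4$ and any two rows have inner product $(\text{odd})+3\equiv 0\pmod 2$, so the row span $C_3$ is doubly even of length $40$. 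One then invokes the theorem of MacWilliams--Sloane--Thompson \cite{MST} that any doubly even code of length divisible by $8$ is \emph{contained in} some doubly even self-dual code $C$. No rank computation enters.

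Your minimum-weight step is also too vague. The paper bounds the dual distance: by Tonchev's lemma \cite{T86b}, every nonzero $x\in C_3^\perp$ except the three weight-$2$ vectors supported on $\{38,39,40\}$ satisfies $\wt(x)\geq (b+r)/r=184/36>5$; since $C\subset C_3^\perp$ is doubly even it avoids those weight-$2$ vectors, so $d(C)\geq 8$ and $C$ is extremal. The paper then proves an additional constraint you do not mention: no weight-$8$ codeword of $C$ has support containing all three appended coordinates (a short count gives $3n_3=80$, impossible). Consequently the blocks correspond to weight-$12$ codewords of $C$ whose support contains the triple $T=\{38,39,40\}$---not weight-$10$ words with a single border bit---and the search, over all $16470$ codes and all admissible triples $T$, tests whether the required $\lambda=8$ blocks through each point-pair can be found as an $8$-clique in the associated intersection-$3$ graph. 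They cannot, and that finishes the proof.
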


As a corollary of Theorem~\ref{thm}, we have the following.

\begin{cor}
A  quasi-symmetric $2$-$(112,28,9)$ design
with intersection numbers $6$ and $8$ is not
embeddable as a residual design in a 
symmetric $2$-$(149,37,9)$ design.
\end{cor}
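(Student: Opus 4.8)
The plan is to derive the corollary directly from Theorem~\ref{thm} by exploiting the standard relationship between the derived and residual designs of a symmetric design. Suppose, for contradiction, that a quasi-symmetric $2$-$(112,28,9)$ design $\cD'$ with intersection numbers $6$ and $8$ is embeddable as a residual design in a symmetric $2$-$(149,37,9)$ design $\cD$; concretely, say $\cD'$ is the residual of $\cD$ with respect to a block $B$, so that the point set of $\cD'$ is $\cP\setminus B$ and its blocks are the sets $B'\setminus B$ for the $148$ blocks $B'$ of $\cD$ other than $B$. The same block $B$ also yields the derived design $\cD''$, whose point set is $B$ and whose blocks are the sets $B\cap B'$ for those same blocks $B'$. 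It is classical that the derived design of a symmetric $2$-$(v,k,\lambda)$ design with respect to a block is a $2$-$(k,\lambda,\lambda-1)$ design, so here $\cD''$ is a $2$-$(37,9,8)$ design; it therefore remains only to determine its intersection numbers.

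First I would fix two blocks $B_1,B_2$ of $\cD$ distinct from $B$ and write $B_1\cap B_2$ as the disjoint union of $B_1\cap B_2\cap B$ and $(B_1\setminus B)\cap(B_2\setminus B)$. Since $\cD$ is symmetric with $\lambda=9$, we have $|B_1\cap B_2|=9$. The second part of this union is precisely the intersection of the two blocks $B_1\setminus B$ and $B_2\setminus B$ of the residual design $\cD'$, hence has cardinality $6$ or $8$. Consequently the first part, which is the cardinality of the intersection of the corresponding two blocks of $\cD''$, equals $9-8=1$ or $9-6=3$. As $B_1,B_2$ range over all pairs of blocks of $\cD$ distinct from $B$, and since both values $6$ and $8$ actually occur as intersection numbers of $\cD'$, both values $1$ and $3$ occur in $\cD''$. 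Thus $\cD''$ is a quasi-symmetric $2$-$(37,9,8)$ design with intersection numbers $1$ and $3$, contradicting Theorem~\ref{thm}, which proves the corollary.

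I do not expect a genuine obstacle here: the argument amounts to the bookkeeping identity $\lambda=x'+x''$ relating an intersection number $x'$ of the residual design and the corresponding intersection number $x''$ of the derived design of a symmetric $2$-$(v,k,\lambda)$ design. The only points that merit an explicit line of justification are that $\cD''$ is indeed a $2$-$(37,9,8)$ design (the well-known fact about derived designs of symmetric designs recalled above, with $(v,k,\lambda)=(149,37,9)$), and that both intersection numbers $1$ and $3$ of $\cD''$ are genuinely attained, which is immediate from the corresponding property of $\cD'$.
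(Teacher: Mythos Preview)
Your proof is correct and is precisely the argument the paper intends: the corollary is stated without proof, but the introduction spells out that if a quasi-symmetric $2$-$(37,9,8)$ design with intersection numbers $1$ and $3$ were embeddable as a derived design in a symmetric $2$-$(149,37,9)$ design, the corresponding residual would be a quasi-symmetric $2$-$(112,28,9)$ design with intersection numbers $6$ and $8$; your contrapositive via the identity $|B_1\cap B_2|=|B_1\cap B_2\cap B|+|(B_1\setminus B)\cap(B_2\setminus B)|$ is exactly this.
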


%%%%%%%%%%%%%%%%%%%%%%%%%%%%%%%%%%%%%%%%%%%%%%%%%%%
\section{Quasi-symmetric designs and doubly even self-dual codes}

A (binary) $[n,k]$ {\em code} $C$ is a $k$-dimensional vector subspace
of $\FF_2^n$,
where $\FF_2$ denotes the finite field of order $2$.
All codes in this paper are binary.
The parameter $n$ is called the {\em length} of $C$.
The {\em weight} $\wt(x)$ of a vector $x \in \FF_2^n$ is
the number of nonzero components of $x$.
The {\em support} $\supp(x)$ of a vector 
$x=(x_1,\ldots,x_n) \in \FF_2^n$ is $\{i \mid x_i=1\}$.
A vector of $C$ is called a {\em codeword}.
The minimum nonzero weight of all codewords in $C$ is called
the {\em minimum weight} of $C$, and an $[n,k]$ code with minimum
weight $d$ is called an $[n,k,d]$ code.
A code is called {\em doubly even} if all codewords have weight
%$\equiv 0 \pmod 4$.
divisible by $4$.

The {\em dual} code $C^{\perp}$ of a code
$C$ of length $n$ is defined as
$
C^{\perp}=
\{x \in \FF_2^n \mid x \cdot y = 0 \text{ for all } y \in C\},
$
where $x \cdot y$ is the standard inner product.
A code $C$ is called {\em self-orthogonal}
if $C \subset C^\perp$, and $C$ is called {\em self-dual}
if $C=C^\perp$. 
A doubly even code is self-orthogonal.
A doubly even self-dual code of length $n$ exists if and
only if $n \equiv 0 \pmod 8$.
The minimum weight $d(C)$ of a doubly even self-dual code $C$ of length
$n$ is bounded by $d(C)\le 4 \lfloor n/24 \rfloor +4$~\cite{MS73}.
A  doubly even self-dual
code meeting the upper bound is called {\em extremal}.
Two codes are {\em equivalent} if one can be
obtained from the other by permuting the coordinates.
A classification of doubly even self-dual codes 
was done for lengths up to $40$
(see~\cite{BHM} and the references given therein, and
the data can be found in~\cite{Data}).
For example, there are $16470$ inequivalent 
extremal doubly even self-dual codes of length $40$.

%%%%%%%%%%%%%%%%%%%%%%%%%%%%%%
Let $\cD$ be a $2$-$(v,k,\lambda)$ design.
Let $A$ be the incidence matrix of $\cD$.
Let $C_1,C_2$ and $C_3$ be the codes generated by the rows of 
the following matrices:
\[
A, \quad
\begin{bmatrix}
 & 1 \\
A& \vdots \\
 & 1
\end{bmatrix}
\]
%\begin{bmatrix}A&\allone\end{bmatrix}
%\text{ 
and 
% }
%\begin{bmatrix}A&\allone&\allone&\allone\end{bmatrix},
\begin{equation}\label{C3}
\begin{bmatrix}
 & 1&1&1 \\
A& \vdots &\vdots &\vdots \\
 & 1&1&1
\end{bmatrix},
\end{equation}
respectively.

\begin{lem}[\cite{T86b}]\label{lem:T}
The codes $C_1^\perp$ and $C_2^\perp$ have minimum weights
at least $(r+\lambda)/\lambda$ and
at least $(b+r)/r$, respectively.
\end{lem}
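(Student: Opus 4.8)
The plan is to reinterpret membership in the dual codes as a combinatorial condition on subsets of the point set $\cP$ and then to run a Fisher-type double count. For the first bound, note that since the rows of $A$ are the characteristic vectors of the blocks, a vector $x\in\FF_2^v$ lies in $C_1^\perp$ if and only if $|B\cap\supp(x)|$ is even for every block $B$ of $\cD$. So let $x$ be a nonzero codeword of $C_1^\perp$, put $S=\supp(x)$ with $w=|S|\ge 1$, and fix a point $p\in S$. I would count the flags $(B,q)$ with $p,q\in B$, $q\in S$, and $q\ne p$ in two ways. For each of the $r$ blocks $B$ through $p$, the set $B\cap S$ has even cardinality and contains $p$, hence $|B\cap S|\ge 2$, so $B$ supplies at least one such $q$; thus there are at least $r$ flags. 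On the other hand, each of the $w-1$ points $q\in S\setminus\{p\}$ lies together with $p$ in exactly $\lambda$ blocks, so the number of flags equals $\lambda(w-1)$. Hence $\lambda(w-1)\ge r$, so $w\ge (r+\lambda)/\lambda$.

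For the second bound, write a nonzero codeword of $C_2^\perp$ as $(x,\varepsilon)$ with $x\in\FF_2^v$ and $\varepsilon\in\FF_2$; orthogonality to the $i$th row of the generator matrix $\begin{bmatrix}A&\allone\end{bmatrix}$ of $C_2$ says that $|B_i\cap\supp(x)|\equiv\varepsilon\pmod 2$ for every block $B_i$. If $\varepsilon=0$, then $x$ is a nonzero codeword of $C_1^\perp$, and the first part gives $\wt\bigl((x,0)\bigr)=\wt(x)\ge (r+\lambda)/\lambda$. If $\varepsilon=1$, then $S=\supp(x)$ meets every block in an odd, hence positive, number of points (in particular $x\ne 0$), so counting the incident pairs $(B,p)$ with $p\in B\cap S$ yields $r|S|=\sum_{B}|B\cap S|\ge b$, whence $|S|\ge b/r$ and $\wt\bigl((x,1)\bigr)=|S|+1\ge (b+r)/r$. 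Using the standard identities $r(k-1)=\lambda(v-1)$ and $bk=vr$ one checks that $r^2-\lambda b=r^2(v-k)/\bigl((v-1)k\bigr)>0$, so $(r+\lambda)/\lambda>(b+r)/r$; hence the minimum over the two cases is $(b+r)/r$, as claimed.

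I do not expect a real obstacle here: both estimates are short double counts that use only that $\cD$ is a $1$-design (for the multiplicity $r$) and a $2$-design (for the multiplicity $\lambda$). The only points that need a little care are checking that the case $\varepsilon=1$ is not vacuous, which forces $x\ne 0$ and makes the incidence count meaningful, and verifying the inequality $(r+\lambda)/\lambda\ge(b+r)/r$ so that the overall minimum weight of $C_2^\perp$ is indeed the quantity asserted.
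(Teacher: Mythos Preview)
Your argument is correct. The paper does not supply its own proof of this lemma; it simply cites Tonchev~\cite{T86b}, and the subsequent Remark observes that the original formulation there gave the bound $\min\{(r+\lambda)/\lambda,(b+r)/r\}$ for $C_2^\perp$, together with the identities $(r+\lambda)/\lambda=(v-1)/(k-1)+1$ and $(b+r)/r=v/k+1$. Your two double counts are the standard ones, and your verification that $(r+\lambda)/\lambda>(b+r)/r$ (equivalently $r^2>\lambda b$, which under $k<v$ follows from Fisher's inequality in its quantitative form) is exactly what the Remark is pointing at when it records those two expressions side by side. So your proof fills in precisely what the paper omits, by the intended route.
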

\begin{rem}
In~\cite[Lemma~2.3]{T86b}, it was claimed that
$C_2^\perp$ has minimum weight at least
$\min\{(r+\lambda)/\lambda,(b+r)/r\}$.
We note that $(r+\lambda)/\lambda=(v-1)/(k-1)+1$ and $(b+r)/r=v/k+1$.
\end{rem}

\begin{lem}\label{lem:d1}
Let $x$ be a nonzero vector of $C_3^\perp$.
Suppose that
\begin{equation}\label{eq:C3}
x \not \in \{
(0,\ldots,0,1,1,0),
(0,\ldots,0,1,0,1),
(0,\ldots,0,0,1,1)\}. 
\end{equation}
Then $\wt(x) \ge (b+r)/r$.
\end{lem}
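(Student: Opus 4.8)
The plan is to reduce a codeword of $C_3^\perp$ to a codeword of $C_2^\perp$ and then quote the lower bound on the minimum weight of $C_2^\perp$ from Lemma~\ref{lem:T}. First I would write an arbitrary vector $x \in C_3^\perp \subseteq \FF_2^{v+3}$ as $x = (y, z_1, z_2, z_3)$ with $y \in \FF_2^v$ and $z_1, z_2, z_3 \in \FF_2$. The generator matrix \eqref{C3} of $C_3$ has rows $(a, 1, 1, 1)$ as $a$ ranges over the rows of the incidence matrix $A$, so the condition $x \in C_3^\perp$ says precisely that $a \cdot y + z_1 + z_2 + z_3 = 0$ for every row $a$ of $A$. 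Setting $w = z_1 + z_2 + z_3 \in \FF_2$, this is exactly the statement that $(y, w) \in \FF_2^{v+1}$ is orthogonal to every row $(a, 1)$ of the generator matrix of $C_2$; that is, $(y, w) \in C_2^\perp$.

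Next I would distinguish two cases. If $y = 0$, the orthogonality relation forces $z_1 + z_2 + z_3 = 0$ in $\FF_2$, so $x$ is one of the four vectors $(0,\ldots,0,0,0,0)$, $(0,\ldots,0,1,1,0)$, $(0,\ldots,0,1,0,1)$, $(0,\ldots,0,0,1,1)$. Since $x \ne 0$ and $x$ is assumed to avoid the list in \eqref{eq:C3}, this case cannot occur, so in fact $y \ne 0$. Then $(y, w)$ is a \emph{nonzero} element of $C_2^\perp$, and Lemma~\ref{lem:T} yields $\wt\bigl((y,w)\bigr) \ge (b+r)/r$, where $\wt\bigl((y,w)\bigr)$ equals $\wt(y)$ if $w = 0$ and $\wt(y)+1$ if $w = 1$.

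Finally I would compare weights: $\wt(x) = \wt(y) + |\{i : z_i = 1\}|$, and $|\{i : z_i = 1\}| \ge 1$ whenever $w = z_1+z_2+z_3 = 1$ in $\FF_2$ (an odd number of ones is in particular at least one), so $\wt(x) \ge \wt\bigl((y,w)\bigr) \ge (b+r)/r$, which is the claim. There is essentially no obstacle here beyond bookkeeping; the one point worth isolating is that the three vectors excluded in \eqref{eq:C3} are exactly the nonzero codewords of $C_3^\perp$ supported on the last three coordinates, i.e.\ precisely the output of the $y = 0$ case, so the hypothesis of the lemma cannot be weakened.
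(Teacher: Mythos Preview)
Your proof is correct and follows essentially the same approach as the paper: reduce a codeword of $C_3^\perp$ to a nonzero codeword of a shorter dual code and invoke Lemma~\ref{lem:T}. The only difference is cosmetic---the paper splits on the parity of the last three coordinates and uses the $C_1^\perp$ bound in the even case and the $C_2^\perp$ bound in the odd case, whereas you map uniformly to $C_2^\perp$ via $(y,w)$ with $w=z_1+z_2+z_3$; since $(r+\lambda)/\lambda\ge(b+r)/r$ anyway, both routes yield the same conclusion.
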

\begin{proof}
Let $x=(x_1,x_2)$ be a nonzero vector of
$C_3^\perp$, where $x_1 \in \FF_2^v$ and $x_2 \in \FF_2^3$.

Suppose that $\wt(x_2)$ is even.
Then $x_1 \in C_1^\perp$.
From~\eqref{eq:C3},  $x_1$ is a nonzero vector.
Hence, 
$\wt(x) \ge \wt(x_1) \ge (r+\lambda)/\lambda
=(v-1)/(k-1)+1$ by Lemma~\ref{lem:T}. 

Suppose that $\wt(x_2)$ is odd.
Then $(x_1,1) \in C_2^\perp$.
Hence, $\wt(x) \ge \wt((x_1,1)) \ge (b+r)/r$
%=v/k+1$ 
by Lemma~\ref{lem:T}.
The result follows.
\end{proof}

Let $C$ be a doubly even code of length $n \equiv 0 \pmod 8$.
By Theorem~2.2 in~\cite{MST}, there is a doubly even
self-dual code of length $n$ containing $C$.

\begin{lem}\label{lem:d2}
Let $\cD$ be a quasi-symmetric $2$-$(v,k,\lambda)$ design
with intersection numbers $x$ and $y$.
Suppose that $v \equiv 5 \pmod 8$, $k \equiv 1 \pmod 4$ and
$x,y$ are odd.
Let $A$ be the incidence matrix of $\cD$.
Let $C$ be a doubly even self-dual code of length $v+3$ containing
the row span of the 
matrix~\eqref{C3}.
%following matrix:
%\[
%G=
%\begin{bmatrix}
% & 1&1&1 \\
%A& \vdots &\vdots &\vdots \\
% & 1&1&1
%\end{bmatrix}.
%\]
% \[\begin{bmatrix}A&\allone&\allone&\allone\end{bmatrix}.\]
Then $C$ has minimum weight at least $(b+r)/r$.
\end{lem}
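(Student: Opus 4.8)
The plan is to show that every nonzero codeword of $C$ that violates the exception in~\eqref{eq:C3} has weight at least $(b+r)/r$, and then to deal separately with the three exceptional vectors, using that $C$ is doubly even. First I would take an arbitrary nonzero $x \in C$. Since $C$ is self-dual, it contains the row span of~\eqref{C3}, and that row span is exactly $C_3$; hence $C \subseteq C_3^\perp$ (because $C$ is self-dual, so $C = C^\perp \subseteq C_3^\perp$). So $x$ is a nonzero vector of $C_3^\perp$, and Lemma~\ref{lem:d1} applies: either $\wt(x) \ge (b+r)/r$ directly, or $x$ is one of the three vectors listed in~\eqref{eq:C3}.

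The key observation is that those three exceptional vectors each have weight $2$, hence are \emph{not} doubly even, so they cannot belong to $C$. This is where the hypotheses on the parameters enter: I would check that $v \equiv 5 \pmod 8$ makes $v+3 \equiv 0 \pmod 8$, so a doubly even self-dual code of length $v+3$ exists and the ambient code $C$ is genuinely doubly even; and I would verify that under $k \equiv 1 \pmod 4$ and $x,y$ odd, the generating matrix~\eqref{C3} indeed has all rows of weight $k+3 \equiv 0 \pmod 4$ and all pairwise row sums of weight $2(k - z) + 0 = 2(k-z)$ with $z \in \{x,y\}$ odd, hence $\equiv 0 \pmod 4$ — so the row span of~\eqref{C3} is doubly even and it makes sense to speak of a doubly even self-dual code $C$ containing it. (This last check justifies the existence claim implicit in the statement; it is essentially the computation that motivates the definition of $C_3$.) Granting doubly evenness of $C$, none of the three weight-$2$ vectors lies in $C$, so the exceptional case of Lemma~\ref{lem:d1} never occurs for $x \in C$.

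Therefore every nonzero $x \in C$ satisfies $\wt(x) \ge (b+r)/r$, which is exactly the assertion that the minimum weight of $C$ is at least $(b+r)/r$.

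I expect the only real content beyond invoking Lemma~\ref{lem:d1} is the bookkeeping that confirms the three exceptional vectors have weight $2$ (immediate from~\eqref{eq:C3}) and that weight $2$ is not divisible by $4$; the main obstacle, such as it is, is making sure the parity hypotheses on $v$, $k$, $x$, $y$ are genuinely used — they are needed both to guarantee that a doubly even self-dual $C$ of length $v+3$ containing the row span of~\eqref{C3} exists at all (via the result of~\cite{MST} quoted just before the lemma, after checking the row span is doubly even), and implicitly to make the statement non-vacuous. No nontrivial calculation is required once Lemma~\ref{lem:d1} is in hand.
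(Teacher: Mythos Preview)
Your proof is correct and follows essentially the same route as the paper: verify from the congruence hypotheses that the row span of~\eqref{C3} is doubly even (so that $C$ exists), use self-duality to get $C\subseteq C_3^\perp$, and then apply Lemma~\ref{lem:d1}, noting that the three exceptional weight-$2$ vectors cannot lie in a doubly even code. The paper's proof is just a terse two-line version of exactly this argument.
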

\begin{proof}
From the assumption on the parameters of $\cD$, 
the code generated by the rows of the matrix~\eqref{C3}
is doubly even.
By Lemma~\ref{lem:d1}, the doubly even self-dual code
$C$ has minimum weight at least $(b+r)/r$.
\end{proof}

%%%%%%%%%%%%%%%%%%%%%%%%%%%%%%%%%%%%%%%%%%%%%%%%%%%
\section{Quasi-symmetric $2$-$(37,9,8)$ designs and
doubly even self-dual codes of length $40$} 

% For a subset $S$ of $\{1,\dots,40\}$, we denote by 
% $\allone_S$ the characteristic vector of $S$ 
% in $\FF_2^{40}$.

\begin{thm}
Let $A$ be the incidence matrix of a quasi-symmetric
$2$-$(37,9,8)$ design $\cD$ with intersection numbers $1$ and $3$.
Let $C$ be a doubly even self-dual $[40,20]$ code containing
the row span of the 
matrix~\eqref{C3}.
%following matrix:
%\[
%G=
%\begin{bmatrix}
% & 1&1&1 \\
%A& \vdots &\vdots &\vdots \\
% & 1&1&1
%\end{bmatrix}.
%%G=\begin{bmatrix}A&\allone&\allone&\allone\end{bmatrix}.
%\]
Then $C$ has minimum weight $8$, and there is no codeword of
weight $8$ whose support contains the last three coordinates.
\end{thm}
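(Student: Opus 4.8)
The plan is to treat the two assertions separately. For the minimum weight, I would first check that $\cD$ satisfies the hypotheses of Lemma~\ref{lem:d2}: indeed $v=37\equiv5\pmod8$, $k=9\equiv1\pmod4$, and the intersection numbers $1$ and $3$ are odd. Hence $C$ has minimum weight at least $(b+r)/r$. Since $r=\lambda(v-1)/(k-1)=36$ and $b=vr/k=148$, this bound equals $184/36=46/9>5$, so the minimum weight is at least $6$; as $C$ is doubly even, the minimum weight is divisible by $4$ and is therefore at least $8$. Conversely, $C$ is a doubly even self-dual code of length $40$, so its minimum weight is at most $4\lfloor 40/24\rfloor+4=8$. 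Thus $C$ has minimum weight exactly $8$ (in particular $C$ is extremal).

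For the second assertion, suppose to the contrary that $x=(x_1,x_2)\in C$ has $\wt(x)=8$, where $x_1\in\FF_2^{37}$ and $x_2\in\FF_2^3$, and that $x_2=(1,1,1)$; then $\wt(x_1)=5$. Because $C$ is self-dual and contains the row span of~\eqref{C3}, we have $x\in C_3^\perp$, that is, $a\cdot x_1+1=0$ for every row $a$ of $A$. Equivalently, putting $S=\supp(x_1)$, a set of $5$ points, every block of $\cD$ meets $S$ in an odd number of points. Let $n_i$ be the number of blocks $B$ with $|B\cap S|=i$; then $n_0=n_2=n_4=0$, and the standard counting of (block, point) and (block, point-pair) incidences involving $S$ gives
\[
\sum_i n_i=b=148,\qquad \sum_i i\,n_i=|S|\,r=180,\qquad \sum_i\binom{i}{2}n_i=\binom{|S|}{2}\lambda=80.
\]
Solving this linear system forces $n_5=8$, $n_3=0$, $n_1=140$. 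In particular at least two distinct blocks of $\cD$ contain all of $S$, so their intersection has size at least $|S|=5$, which contradicts the hypothesis that the intersection numbers are $1$ and $3$. Hence no such codeword exists.

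I expect essentially all the real work to be in the second step; the first is immediate from Lemma~\ref{lem:d2} and the extremality bound. The point worth flagging is that a codeword of the form $(x_1,(1,1,1))$ with $\wt(x_1)=5$ cannot be ruled out by a weight estimate alone: in the notation of the proof of Lemma~\ref{lem:d1} one only gets $\wt((x_1,1))\ge(b+r)/r$ in $C_2^\perp$, and here $\wt((x_1,1))=6$ already meets this bound. So one must genuinely use the combinatorics of $\cD$, and the counting above — which pushes every block intersection with $S$ into the values $1$ and $5$, producing eight blocks through $S$ and hence a pair of blocks meeting in too many points — is what supplies the contradiction.
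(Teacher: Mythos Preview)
Your argument is correct in both parts, and the first part matches the paper exactly (you even make the doubly-even step explicit, which the paper leaves implicit in its one-line appeal to Lemma~\ref{lem:d2}).

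For the second part your route diverges from the paper's. You set up all three counting identities in the $n_i$, solve to obtain $n_5=8$, and then invoke the quasi-symmetric hypothesis: two of those eight blocks through $S$ would meet in at least $5$ points, contradicting the intersection numbers $1$ and $3$. The paper instead first eliminates $n_5$ by a coding argument: if some block $B$ contains $S$, then the corresponding row of~\eqref{C3} added to $x$ is a codeword of weight $|B\setminus S|=4$, contradicting the minimum weight $8$ just established. With $n_5=0$ in hand, the single pair-count $\sum_i\binom{i}{2}n_i=80$ reduces to $3n_3=80$, an immediate integrality contradiction. So the paper leans on the code (re-using the extremality) where you lean on the design (re-using the intersection numbers); your approach needs the full $3\times3$ linear system but avoids the ``weight-$4$ codeword'' observation, while the paper's approach needs only one counting equation at the cost of that extra observation. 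Both are short and self-contained; yours has the minor advantage of being purely combinatorial once the orthogonality $x\in C_3^\perp$ is noted, and your closing remark about why Lemma~\ref{lem:d1} alone cannot dispose of this case is accurate and worth keeping.
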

\begin{proof}
By Lemma~\ref{lem:d2}, $C$ has minimum weight $8$,
since the minimum weight of a doubly even self-dual code
of length $40$ is at most $8$.

Let $\cB$ be the block set of $\cD$.
Suppose that $x$ is a codeword of
weight $8$ whose support contains the last three coordinates.
Let $S=\supp(x) \setminus \{38,39,40\}$
and
\[n_i=|\{B\in\cB\mid |S\cap B|=i\}|\quad(0\leq i\leq 5).\]
Note that $n_{i}=0$ ($i=0,2,4$).
Moreover, if
%Then we have the system of equations~(see~\cite{T86b}):
%\begin{align*}
%n_1+n_3+n_5 &=148,\\
%n_1 +3n_3 +5n_5&=5 \cdot 36,\\
%\binom{3}{2}n_3 + \binom{5}{2}n_5 &=\binom{5}{2} 8.
%\end{align*}
%The system has the following unique solution:
%\[
%n_1 = 140, n_3 = 0, n_5 = 8.
%\]
%Since 
$n_5 >0$, then there exists a block containing $S$.
%$|S\cap B|=5$.  
This means that $C$ contains a codeword of weight $4$, 
which contradicts the already proven fact that $C$ has minimum weight $8$.
Thus $n_5=0$.
Counting the number of pairs $(T,B)$ with $T\subset S\cap B$, 
$|T|=2$ and $B \in \cB$,
we find $3n_3=80$. This is impossible since $n_3$ is an integer.
% the dual code of the code generated by
% the rows of the matrix \eqref{C3} contains a vector of weight $4$.
% This contradicts Lemma~\ref{lem:d2}.
\end{proof}

\section{The search method} 

All computer calculations in this section
were done with the help of {\sc Magma}~\cite{Magma}.
Starting from an extremal doubly even self-dual $[40,20,8]$ code $C$,
we take $T\subset\{1,2,\dots,40\}$ with $|T|=3$ such that no codeword
of weight $8$ contains $T$ in its support, and consider
the following set:
\[X=\{\supp(x)\setminus T\mid x\in C,\;\wt(x)=12,\;
\supp(x)\supset T\}.\]
If there is a quasi-symmetric $2$-$(37,9,8)$ design
with intersection numbers $1$ and $3$, 
an isomorphic copy can be found in $X$ for some choice of $C$ and $T$.
In order to determine whether such $X$ contains a design, we first tested
whether the graph $\Gamma_{ij}=(V,E)$ with
\begin{align*}
V&=\{B\in X\mid B\supset\{i,j\}\},\\
E&=\{\{B,B'\}\mid B,B'\in V,\;|B\cap B'|=3\},
\end{align*}
has a clique of size $\lambda=8$
for distinct $i,j\in\{1,2,\dots,40\}\setminus T$. 
% The graph $\Gamma_{ij}$ must have a clique of size $\lambda=8$.

There are $16470$ inequivalent 
extremal doubly even self-dual codes of length $40$~\cite{BHM}.
%By the above method, we 
We verified that
$15940$ 
%codes 
of them contain no 
triple $T$ such that $\Gamma_{ij}$ has a clique of size $8$ for
all $i,j\in\{1,2,\dots,40\}\setminus T$ with $i\neq j$.
%quasi-symmetric $2$-$(37,9,8)$ design.

For the remaining $530$ codes, we %examine 
used the following method.
Fix $i_0,j_0\in\{1,2,\dots,40\}\setminus T$ and enumerated
all cliques of size $8$ in the graph $\Gamma_{i_0,j_0}$.
For such a clique $K$ and distinct $i,j\in\{1,\dots,40\}\setminus T$,
consider the subgraph $\Gamma_{ij}^K$ of $\Gamma_{ij}$ induced by
the set of vertices
\[\{B\in X\mid B\supset\{i,j\},\;|B\cap B'|\in\{1,3\}
\text{ for all }B'\in K\}.\]
If there exists a quasi-symmetric $2$-$(37,9,8)$ design 
with intersection numbers $1$ and $3$
whose block set is contained in $X$, then the block set contains a clique $K$
of size $8$ in $\Gamma_{i_0,j_0}$, and 
\begin{equation}\label{GijK}
\Gamma_{ij}^K\text{ has a clique of size $8$ for all distinct
$i,j\in\{1,\dots,40\}\setminus T$.}
\end{equation}
We chose the initial $i_0,j_0$ in such a way that $\Gamma_{i_0,j_0}$
has minimal number of cliques of size $8$, for computational efficiency.
It turns out that, 
for each of the remaining $530$ codes and each choice of a
triple $T$, our choice of $i_0,j_0$ shows that no clique $K$
of size $8$ in $\Gamma_{i_0,j_0}$ satisfies the condition~\eqref{GijK}.
Therefore, we have established Theorem~\ref{thm}.
We note that the
%\bigskip
%{\bf (Please give the second method)}
CPU time for the entire search was approximately 300 hours.

%%%%%%%%%%%%%%%%%%%%
\bigskip
\noindent
{\bf Acknowledgment.}
This work was supported by JSPS KAKENHI Grant Number 15H03633.
Vladimir D. Tonchev acknowledges support by NSA Grant H98230-16-1-0011.

%%%%%%%%%%%%%%%%%  References  %%%%%%%%%%%%%%%%%%%%%%%%

\end{document}